\newcommand{\vol}{\mathop{\rm vol}\nolimits}
\newcommand{\Tw}{\mathop{\rm tw}\nolimits}
\newcommand{\tild}[1]{{\widetilde{#1}}}
\newcommand{\HH}{{\mathbb{H}}}
\newcommand{\RR}{{\mathbb{R}}}
\newcommand{\ZZ}{{\mathbb{Z}}}
\newcommand{\CC}{{\mathbb{C}}}
\newcommand{\oo}{{\boldsymbol{o}}}
\newcommand{\pp}{{\boldsymbol{p}}}
\theoremstyle{plain}
\newtheorem{theorem}{Theorem}[section]
\newtheorem{lemma}[theorem]{Lemma}
\newtheorem{prop}[theorem]{Proposition}
\theoremstyle{definition}
\newtheorem{define}[theorem]{Definition}
\newtheorem*{namedtheorem}{\theoremname}
\newcommand{\theoremname}{testing}
\title[Hyperbolic geometry of multiply twisted knots]{Hyperbolic
geometry \\ of multiply twisted knots}
\author{Jessica S. Purcell}
\address{Jessica S. Purcell, Department of Mathematics, Brigham Young
University, Provo, UT 84602}
\email{jpurcell@math.byu.edu}
\begin{document}
\bibliographystyle{hamsplain}

\begin{abstract}
We investigate the geometry of hyperbolic knots and links whose
diagrams have a high amount of twisting of multiple strands.  We find
information on volume and certain isotopy classes of geodesics for the
complements of these links, based only on a diagram.  The results are
obtained by finding geometric information on generalized augmentations
of these links.
\end{abstract}

\maketitle


\section{Introduction}
\label{sec:intro}
By Mostow--Prasad rigidity and work of Gordon and Luecke
\cite{gordon-luecke}, the hyperbolic structure on the complement of a
hyperbolic knot is a knot invariant, and ought to be useful in
problems of knot and link classification.  In practice, this structure
seems difficult to compute.

In recent years, some geometric properties of hyperbolic knots and
links have been discovered for links admitting certain types of
diagrams, such as alternating links \cite{lackenby:alt-volume}, and
highly twisted knots and links \cite{purcell:cusps, purcell:volume,
fkp}.  However, many knots that are of interest to topologists and
hyperbolic geometers do not fall into these classes.  These include
Berge knots \cite{berge, baker:I, baker:II}, twisted torus knots and
Lorenz knots \cite{birman-kofman}, which contain many of the smallest
volume hyperbolic knots \cite{champanerkar-kofman}.  These knots often
have diagrams that are highly non-alternating, with few twists per
twist region, but contain regions where multiple strands of the
diagram twist around each other some number of times.  We would like
to be able to understand and estimate geometric properties of these
``multiply twisted'' knots and links, given only a diagram, but
currently we do not have the tools to do so.

In this paper, we take a first step toward such an understanding.  We
investigate the geometry of knots and links with diagrams with a high
amount of twisting of multiple strands.  We find information on the
geometry of these knots, including volume bounds and certain isotopy
classes of geodesics, based only on a diagram.  

The results are obtained \emph{augmenting} the knot and link diagrams.
That is, we encircle each twist of multiple strands by a simple closed
curve, unknotted in $S^3$.  The resulting link is called a generalized
augmented link, generalizing a construction of Adams in which two
twisting strands are encircled by an unknotted component
\cite{adams:aug}.  When one performs $1/n$ Dehn filling on the
augmentation components of these links, one adds $n$ full twists to
the strands.  All diagrams can be obtained by such twisting.  (See
section \ref{sec:character} for a more careful discussion.)  Hence
geometric information on a generalized augmented link, combined with
geometric information under Dehn filling, leads to geometric results
on knot complements.

Regular augmented links have a very nice hyperbolic structure,
including a decomposition into right angled ideal polyhedra, first
written down by Agol and Thurston
\cite[Appendix]{lackenby:alt-volume}.  Generalized augmented links do
not have as nice structure, but still contain enough symmetry to
obtain geometric estimates.  To obtain geometric information on Dehn
fillings of these manifolds, one may turn to results on cone
deformations due to Hodgson and Kerckhoff \cite{hk:rigid, hk:univ,
hk:shape}, or hyperbolike filling of Agol and Lackenby
\cite{agol:bounds, lackenby:word}, or volume change results due to
Futer, Kalfagianni, and the author \cite{fkp}.

We have investigated generalized augmented links elsewhere.  In
\cite{purcell:slopes}, we bounded the lengths of certain slopes on
these links, and showed that many knots obtained by their Dehn
fillings have meridian length approaching $4$ from below.  With Futer
and Kalfagianni, in \cite{fkp:coils} we investigated properties of
volumes of a very particular class of these links.  Here, we broaden
the results to larger classes of knots and links.

Finally, note that the focus of this paper is on geometric information
on hyperbolic generalized augmented links and their Dehn fillings.  In
a companion paper, we discuss results for generalized augmented links
which are not hyperbolic \cite{purcell:geom-aug}.

\subsection{Acknowledgements}

Research was partially funded by NSF grant DMS--0704359.  We thank
David Futer and John Luecke for helpful conversations.

\section{Characterization of generalized augmented links}
\label{sec:character}
We will be analyzing twisting and twist regions in a knot diagram.
Twist regions and generalized twist regions are defined carefully in
\cite{purcell:slopes}.  We review definitions here for convenience.

\begin{define}
	Let $K$ be a link in $S^3$, and let $D$ be a diagram of the link.
	We may view $D$ as a 4--valent graph with over--under crossing
	information at each vertex.  A \emph{twist region} of the diagram
	$D$ is a sequence of bigon regions of $D$ arranged end
	to end, which is maximal in the sense that there are no other bigons
	on either end of the sequence.  A single crossing adjacent to no
	bigons is also a twist region.

	We will assume throughout that the diagram is alternating within a
	twist region, else replace it with a diagram with fewer crossings in
	the twist region.
\label{def:twist}
\end{define}

In a \emph{twist region} of a diagram, two strands twist around each
other maximally, as in Figure \ref{fig:twist}(a), and bound a
``ribbon'' surface.

\begin{define}
A \emph{generalized twist region} of $D$ is a region of the diagram
where two or more strands twist around each other maximally, as in
Figure \ref{fig:twist}(b).  More precisely, a generalized twist region
is a region of the diagram consisting of $m\geq 2$ parallel strands.
When all the strands except the outermost two are removed from this
region of the diagram, the remaining two strands form a twist region.
In $S^3$, these two strands bound a ribbon surface between them.
Remaining strands of the generalized twist region can be isotoped to
lie parallel to each other, embedded on this ribbon surface.
\label{def:gen-twist-region}
\end{define}

\begin{figure}
	(a)
	\includegraphics{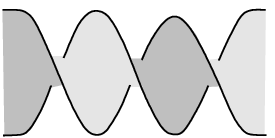}
	\hspace{.2in}
	(b)
	\includegraphics{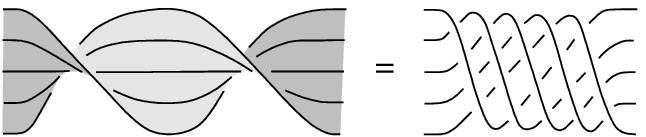}
	\caption{(a) A twist region.  (b) A generalized twist region.
	Multiple strands lie on the twisted ribbon surface.}
\label{fig:twist}
\end{figure}

The amount of twisting in each twist region is also important.  We
describe the amount of twisting in terms of half--twists and
full--twists.

\begin{define}
	Let $K$ be a link in $S^3$.  A \emph{half--twist} of a generalized
	twist region of a diagram consists of a single crossing of the two
	outermost strands.  The ribbon surface they bound, containing other
	strands of the twist region, flips over once in a half--twist.

	A \emph{full--twist} consists of two half--twists.  Figure
	\ref{fig:twist}(b) shows a single full--twist, or two half--twists,
	of five strands.
\label{def:half-twist}
\end{define}

Given a diagram of a link in $S^3$, group crossings into generalized
twist regions, such that each crossing is contained in exactly one
generalized twist region.  Call such a choice of generalized twist
regions a \emph{maximal twist region selection}.  Note the choice is
not necessarily unique.  For example, in Figure \ref{fig:twist}(b), we
could group the crossings shown into a single generalized twist region
containing a full--twist of five strands, or into twenty regular twist
regions, each containing a single half--twist of two strands.  Either
choice is a valid maximal twist region selection, although the former
seems more correct.

Now, at each generalized twist region in the maximal twist region
selection, insert a \emph{crossing circle}, that is, a simple closed
curve $C_i$ encircling the strands of the twist region, and bounding a
disk $D_i$ in $S^3$, perpendicular to the projection plane.  The $D_i$
are called \emph{twisting disks}.  See Figure \ref{fig:cross-cir}(a).
We can select the $C_i$ and the $D_i$ such that the collection of all
$D_i$ is a collection of disjoint disks in $S^3$.

When crossing circles are inserted at each twist region in the maximal
twist region selection, we obtain a new link, with components $K_j$
from the original link $K$, and crossing circles $C_i$.  The
complement of this link is homeomorphic to the complement of the link
$L$ obtained by untwisting at each $C_i$.  That is, we may remove all
full--twists from each generalized twist region of the link diagram
without changing the homeomorphism type of the link complement.  See
Figure \ref{fig:cross-cir}(b).

\begin{figure}
\begin{tabular}{ccccc}
	(a) & 
	\includegraphics{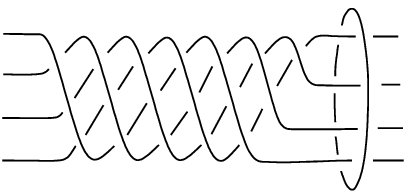} & \hspace{.2in} & (b) &
	\includegraphics{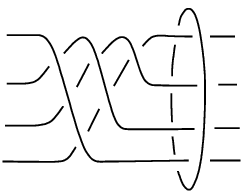}
\end{tabular}
\caption{(a) Encircle each twist region with a crossing circle.  (b)
	Link $L$ given by removing full--twists from the diagram.}
\label{fig:cross-cir}
\end{figure}

The resulting diagram of $L$ consists of unknotted link components
$C_i$ and components obtained from untwisting $K$, which we will call
$K_1, \dots, K_p$.  In the diagram of $L$, the components of $K$ will
either lie flat on the projection plane, or may have single
half--twists encircled by crossing circles.

\begin{define}
We call the link $L$ an \emph{augmentation} of the diagram $D$ of $K$,
or we say $L$ is the augmentation of the diagram $D$ corresponding to
a maximal twist region selection.  We also say that $L$ is obtained by
\emph{augmenting} $K$, and that $L$ is an \emph{generalized augmented}
link.
\label{def:augmentation}
\end{define}

For brevity, we often drop the adjective ``generalized'' from the term
generalized augmented links, since all augmented links we discuss here
are of this form.

The connection between $S^3-L$ and the original link complement is
given by Dehn filling.  Any slope $s$ on a torus $T^2$ is
parameterized by two relatively prime integers $p, q$, where $s =
p\mu+q\lambda$, and $\mu, \lambda$ generate $H_1(T^2; \ZZ)$.  When $M$
is the link complement $S^3-L$, at the $i$-th crossing circle $C_i$,
let $\mu_i, \lambda_i$ denote the meridian and longitude of $\partial
N(C_i)$, respectively.  Then Dehn filling along the slope $\mu_i +
n_i\lambda_i$ gives a new link whose diagram no longer contains $C_i$,
and the strands previously encircled by $C_i$ run through $n_i$
full--twists (see, for example, Rolfsen \cite{rolfsen}).  Thus Dehn
filling connects $S^3-K$ and the complement of the augmented link $L$.

\subsection{Reflection}

The link $L$ admits a reflection, as follows.  Arrange the diagram of
$L$ such that crossing circles of $L$ lie perpendicular to the
projection plane, and reflect the diagram of $L$ in the projection
plane.  The crossing circle components $C_i$ are taken to themselves.
Outside of twist regions, the diagram of $L$ is preserved.  If the
components $K_j$ lie flat on the projection plane, they are also
preserved by the reflection.

If some components $K_j$ run through a single half--twist at a twist
region, then the reflection will reverse all the crossings of the
half--twist, changing the direction of half--twist.  Apply a twist
homeomorphism, twisting one full twist at each half--twist in the
opposite direction.  This reverses the direction of the half--twist.
Thus the composition of the reflection and the twist homeomorphism is
an orientation reversing involution of $S^3-L$.

There is a surface which can be isotoped to be fixed pointwise by this
involution, namely, the projection plane outside of half--twists, and
the ribbon surfaces inside half twists, as well as a half--twisted
surface between $C_i$ and the knot strands.

The above discussion is a proof of the following, which is also
Proposition 3.1 of \cite{purcell:slopes}.

\begin{prop}
	Let $L$ be an augmentation of a diagram of a link in $S^3$.  Then
	$S^3-L$ admits a reflection, i.e. an orientation reversing
	involution with fixed point set a surface.
\label{prop:reflect}
\end{prop}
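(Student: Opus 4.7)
\medskip

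The plan is to construct the orientation reversing involution explicitly, as the composition of a reflection in the projection plane with a correction by twist homeomorphisms at the half--twists, and then identify the fixed point surface.

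First I would position the diagram so that each crossing circle $C_i$ lies in a plane perpendicular to the projection plane $P$ (with $C_i$ a round circle intersecting $P$ in two points), and so that the strands $K_j$ either lie flat on $P$ or run through exactly one half--twist at some $C_i$, as already arranged in Section \ref{sec:character}. Then let $r\co S^3 \to S^3$ denote reflection through $P$. Clearly $r$ is an orientation reversing involution of $S^3$, and by construction each $C_i$ is setwise fixed and each flat portion of the $K_j$ is pointwise fixed.

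The next step is to see what $r$ does at the half--twists. At a half--twist encircled by $C_i$, the crossing circle is preserved but the over/under information is reversed, so $r(L)$ differs from $L$ by flipping each half--twist. To correct this, I would define a homeomorphism $\tau\co S^3-L \to S^3-L$ supported in a small ball neighborhood of each half--twisted region: $\tau$ performs one full twist about the meridian disk of $C_i$ in the sense opposite to the half--twist reversal. Then $\tau \circ r$ sends $L$ to $L$ setwise, and it is orientation reversing since $\tau$ is orientation preserving. To see it is an involution, I would verify locally near each half--twist that two applications of $\tau \circ r$ compose to the identity; this amounts to checking that conjugating the twist $\tau$ by $r$ gives $\tau^{-1}$, which follows because $r$ reverses the sense of the twist about $C_i$.

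Finally I would identify the fixed point surface $F$. Outside of the half--twist balls, $F$ is just the portion of $P$ that lies in $S^3 - L$. Inside each half--twist ball, $F$ consists of the ribbon surface spanned by the two outermost strands (which appears as the fixed surface of the composition of the local reflection and the corrective twist), together with the half--twisted disk bounded by $C_i$ that meets the knot strands transversely. I would check that these pieces glue along $\partial P$ of the half--twist balls to form an embedded surface fixed pointwise by $\tau \circ r$.

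The main obstacle I expect is bookkeeping at the half--twists: carefully specifying $\tau$ so that it is an involution when composed with $r$, and verifying that the candidate fixed surface actually matches up along the boundaries of the local models. Once the local picture at a single half--twist is pinned down, the global statement follows by taking the union over all crossing circles.
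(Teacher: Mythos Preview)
Your proposal is correct and follows essentially the same argument as the paper: reflect in the projection plane, compose with a full twist at each half--twist to undo the crossing reversal, and identify the fixed surface as the projection plane away from half--twists together with the ribbon and half--twisted disk near each crossing circle. The only difference is that you spell out why $\tau\circ r$ is an involution (via $r\tau r=\tau^{-1}$), which the paper leaves implicit.
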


\section{Slopes lengths and hyperbolicity}

In this section, we prove results on slope lengths of generalized
augmented links.  Our methods generalize to hyperbolic manifolds which
admit a reflection, and we state the more general results.

\begin{lemma}
	Let $M$ be a $3$--manifold with torus boundary components with the
	following properties:
\begin{enumerate}
\item $M$ admits an orientation reversing involution $\sigma$ whose
	fixed point set is an embedded surface $P$ in $M$.
\item Some boundary component $T$ of $M$ meets $P$, and for some
	slope $\lambda$ on $T$, $\sigma$ is an orientation reversing
	involution of $\lambda$.  (Write $\sigma(\lambda) = -\lambda$.)
\end{enumerate}
Then $\lambda$ meets $P$ exactly twice.  
\label{lemma:lambda-P}  
\end{lemma}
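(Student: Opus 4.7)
The plan is to reduce the problem to a simple observation about orientation-reversing involutions of $S^1$.

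First, I would unpack the hypothesis: ``$\sigma$ is an orientation reversing involution of $\lambda$'' says that $\sigma$ preserves a representative curve $\lambda \subset T$ setwise and reverses its orientation. (If one reads the hypothesis only on the level of slope classes, first isotope $\lambda$ within its slope class so that this is literally true; this is possible because $\sigma|_T$ is isotopic to a Euclidean involution of the flat torus $T$, which preserves straight representatives of its eigenslopes.) In particular, $\sigma$ restricts to an orientation-reversing involution $\sigma|_\lambda \co \lambda \to \lambda$.

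The key observation is that $P = \mathrm{Fix}(\sigma)$, so for any $\sigma$-invariant subset $A \subseteq M$ one has $A \cap P = \mathrm{Fix}(\sigma|_A)$. Applied with $A = \lambda$, this gives
\[
\lambda \cap P \;=\; \mathrm{Fix}(\sigma|_\lambda).
\]
Now $\sigma|_\lambda$ is a nontrivial orientation-reversing involution of the circle $\lambda \cong S^1$, hence topologically conjugate to the reflection $\theta \mapsto -\theta$ of $S^1$, and therefore has exactly two fixed points. Combining the two statements gives $|\lambda \cap P| = 2$, as required.

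The main obstacle is slight: the only subtle point is the preliminary isotopy needed to interpret ``$\sigma|_\lambda$'' as a literal restriction. After that, the argument deliberately sidesteps what could look like a case analysis, namely that the fixed set $P \cap T$ can consist of either one or two parallel essential circles on $T$ depending on the conjugacy type of $\sigma|_T$ in $\mathrm{Homeo}(T^2)$, with $\lambda$ meeting such a circle twice or once in the two respective cases. Restricting $\sigma$ directly to $\lambda$ bypasses this dichotomy, since the count of fixed points on $S^1$ is always $2$.
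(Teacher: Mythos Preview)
Your argument is correct and is essentially the same as the paper's: both reduce to the observation that an orientation-reversing involution of $S^1$ has exactly two fixed points, and identify $\lambda\cap P$ with $\mathrm{Fix}(\sigma|_\lambda)$. Your write-up is a bit more explicit about the preliminary isotopy and about the identification $\lambda\cap P=\mathrm{Fix}(\sigma|_\lambda)$, but the substance is identical.
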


When our manifold is in fact a generalized augmented link, $\lambda$
may be the slope $\partial D_i$ on $\partial N(C_i)$, for example, or
a slope $\partial D_i$ on $\partial N(K_j)$.

\begin{proof}
  Since $\sigma$ takes $\lambda$ to $-\lambda$, a representative
  of $\lambda$ (which, by abuse of notation, we will also call
  $\lambda$) has a fixed point under $\sigma$.  Thus $\lambda$
  meets $P$.  Additionally, since the only orientation reversing
  involutions of $S^1$ that fix a point must actually fix two points,
  $\lambda$ must meet $P$ twice.
\end{proof}

\begin{lemma}
	Let $M$ be as in Lemma \ref{lemma:lambda-P}.  Then the torus $T$
	is tiled by rectangles, each with one side parallel to the surface
	$P$, and one side orthogonal to $P$.  The lift of these rectangles
	to the universal cover $\widetilde{T}$ gives a lattice in $\RR^2$.
\label{lemma:lattice}
\end{lemma}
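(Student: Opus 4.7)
The plan is to analyze $\sigma|_T$ as a Euclidean involution of the flat torus $T$, extract two perpendicular geodesic directions, and assemble a rectangular tiling from the $\sigma$-invariant translation lattice. First, I would note that $\sigma$ preserves $T$: since $T$ meets $P$, the image $\sigma(T)$ is a boundary component containing the nonempty fixed set $T\cap P$, so $\sigma(T)=T$ by connectedness. Fix a $\sigma$-invariant flat metric on $T$, available from the cusp structure in the hyperbolic setting of interest, where $\sigma$ is realized isometrically. Then $\sigma|_T$ is an orientation-reversing isometric involution of $T$ with nonempty fixed $1$-submanifold $T\cap P$, so its lift to $\widetilde T=\RR^2$ is a Euclidean reflection across some line $\ell$. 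In particular every component of $T\cap P$ is a closed geodesic parallel to $\ell$.

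Second, I would use the hypothesis $\sigma(\lambda)=-\lambda$ to produce the perpendicular direction. A geodesic representative of $\lambda$ lifts to a straight line in $\RR^2$ whose orientation is reversed by the reflection, and a Euclidean reflection reverses the orientation of a line precisely when that line is perpendicular to the axis. Thus $\lambda$ is realized by a geodesic perpendicular to $\ell$, giving two perpendicular distinguished directions on $T$: one parallel to $P$ and one along $\lambda$.

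Finally, I would build the rectangular tiling from the translation lattice $\Lambda=\pi_1(T)\subset\RR^2$. Let $\Lambda_\parallel$ and $\Lambda_\perp$ be the orthogonal projections of $\Lambda$ onto $\ell$ and $\ell^\perp$; each is a nonzero discrete cyclic subgroup (nonzero because $\sigma_*$-invariance of $\Lambda$ forces $v+\sigma_*v$ and $v-\sigma_*v$ into $\Lambda$ for every $v\in\Lambda$), and $\Lambda''=\Lambda_\parallel\oplus\Lambda_\perp$ is a rectangular lattice containing $\Lambda$. The $\Lambda''$-translates of a primitive rectangle with sides parallel to $\ell$ and $\ell^\perp$ tile $\RR^2$, and since $\Lambda\subset\Lambda''$ this tiling is $\Lambda$-invariant and descends to a tiling of $T$ by rectangles with the required sides, whose vertex set lifts to the lattice $\Lambda''$. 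The main technical point is to verify that individual rectangles embed into $T$ without self-identifications, which holds because every nonzero element of $\Lambda\subset\Lambda''$ carries any $\Lambda''$-rectangle to a different rectangle of the $\Lambda''$-tiling, so no interior identifications can occur within a single tile.
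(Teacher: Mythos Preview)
Your argument is correct and shares the paper's core idea: pass to $\widetilde T=\RR^2$, use that $\sigma$ lifts to a Euclidean reflection to see that $P\cap T$ and $\lambda$ determine perpendicular directions, and assemble a rectangular grid. The paper's proof is much terser---it simply lifts the curves $P\cap T$ and a representative of $\lambda$, observes these form two perpendicular families of parallel lines, and takes the lattice to be their intersection points---whereas you work more carefully via projections of the deck group $\Lambda$ and explicitly verify that tiles embed.

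One point worth flagging: your lattice $\Lambda''=\Lambda_\parallel\oplus\Lambda_\perp$ need not coincide with the paper's. When $P\cap T$ has two components (equivalently, when $\Lambda$ itself is rectangular), Lemma~\ref{lemma:lambda-P} says $\lambda$ crosses two $P$-lines per period, so the paper's $\oo$-step is half the generator of $\Lambda_\perp$, while yours is the full generator; your grid then gives $\lambda=\oo$ rather than the $\lambda=2\oo$ asserted in Lemma~\ref{lemma:generators}. Both lattices establish the present lemma, but if you carry your construction forward you would need to rephrase Lemma~\ref{lemma:generators} (and the ensuing length estimates) accordingly.
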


\begin{proof}
Consider the universal cover $\RR^2$ of the torus boundary component
$T$.  As $P$ is embedded, the slopes $P \cap T$ lift to give parallel
lines in $\RR^2$.  A simple curve representing the slope $\lambda$
lifts to give parallel lines perpendicular to the lines from $P$,
since $\lambda$ is taken to $-\lambda$ by the involution $\sigma$
fixing $P$.  The projection of these lines to $T$ gives a tiling of
$T$ by rectangles.  Together, the intersection points of these sets of
lines form a lattice $\ZZ^2$ of $\RR^2$.
\end{proof}

Construct a basis of the lattice of Lemma \ref{lemma:lattice} by
letting $\pp$ be a step parallel to a side from $P \cap T$, and by
letting $\oo$ be a step orthogonal to $\pp$.

\begin{lemma}
  Let $M$ be as in Lemma \ref{lemma:lambda-P}, and let $\{\pp, \oo\}$
  be the basis for the lattice on $\widetilde{T}$ as above.  Then the
  curve $\lambda$, which serves as one generator of $H_1(T;\ZZ)$, is
  given by $2\oo$.  Another generator of $H_1(T;\ZZ)$ is given by $\pp
  + \epsilon \,\oo$, where $\epsilon = 0$ if there are two components
  of $P\cap T$, and $\epsilon = 1$ if there is one component of $P\cap
  T$.
\label{lemma:generators}
\end{lemma}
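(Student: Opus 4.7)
The plan is to realize $L := \pi_1(T) = H_1(T;\ZZ)$ as an index-$2$ sublattice of $\Lambda := \ZZ\pp + \ZZ\oo$ inside $\widetilde{T} = \RR^2$, then pin down which sublattice by counting components of $P \cap T$. Since two $\Lambda$-lifts of the same intersection point in $(P \cap T)\cap \lambda$ on $T$ differ by an element of $L$, the quotient $\Lambda/L$ is in bijection with $(P \cap T)\cap \lambda \subset T$, which has exactly two points by Lemma \ref{lemma:lambda-P}; hence $[\Lambda:L] = 2$.

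For $\lambda = 2\oo$: a lift of $\lambda$ is a straight line in $\widetilde{T}$ perpendicular to the lifts of $P\cap T$, along which consecutive $\Lambda$-lattice points are spaced by $\oo$. Because $\lambda$ meets $P$ exactly twice on $T$, one full traversal of $\lambda$ corresponds to two such $\oo$-steps in $\widetilde{T}$, so $\lambda = 2\oo$.

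For a second generator: any index-$2$ sublattice of $\Lambda$ containing $2\oo$ is the kernel of a nonzero homomorphism $\phi \co \Lambda \to \ZZ/2$ with $\phi(2\oo)=0$, and the three candidates $(\phi(\pp),\phi(\oo)) \in \{(0,1),(1,0),(1,1)\}$ give $L \in \{\langle \pp,\lambda\rangle,\; \langle 2\pp,\oo\rangle,\; \langle \pp+\oo,\lambda\rangle\}$. The middle case forces $\oo \in L$, making $\lambda = 2\oo$ non-primitive and hence not a slope, so it is excluded. To distinguish the remaining two, I would count components of $P \cap T$: horizontal lifts of $P$ at heights $k\oo$ and $k'\oo$ project to the same curve on $T$ iff some element of $L$ has $\oo$-coordinate $(k-k')\oo$. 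For $\langle \pp,\lambda\rangle$ these $\oo$-coordinates form $2\ZZ\oo$, giving two height classes and so two components ($\epsilon = 0$); for $\langle \pp+\oo,\lambda\rangle$ they form $\ZZ\oo$, giving a single component ($\epsilon = 1$).

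The main obstacle I foresee is the last identification, connecting the algebraic shape of the sublattice $L$ to the geometric count of $P \cap T$ components. This requires tracking carefully when two horizontal lifts of $P$ can coincide on $T$, and using that $\lambda$ must be primitive in $H_1(T;\ZZ)$ in order to eliminate the spurious $(1,0)$ case.
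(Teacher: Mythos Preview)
Your argument is correct and follows essentially the same route as the paper's proof: both deduce $\lambda = 2\oo$ from Lemma~\ref{lemma:lambda-P} and then split into two cases according to the number of components of $P\cap T$. The paper reaches the dichotomy more directly by looking at the intersection number of a component $\alpha$ of $P\cap T$ with $\lambda$ (once versus twice), whereas you obtain it by classifying the index-$2$ sublattices of $\Lambda$ containing $2\oo$ and invoking primitivity of $\lambda$ to discard the spurious $\langle 2\pp,\oo\rangle$ case; the underlying content is the same.
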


\begin{proof}
By Lemma \ref{lemma:lambda-P}, $\lambda$ intersects $P$ twice.  Thus
its representative must cross lifts of $P$ twice in the lattice, and
be taken to itself under the involution in $P$, so it is $2\oo$.

Note this implies that all corners of the rectangles formed by $\pp$
and $\oo$ project to just two distinct points on $T$ under the
covering transformation.  These two points are the projection of $\oo$
and the projection of $2\oo$.  Additionally, the fact that
$\lambda=2\oo$ implies that $T$ is tiled by exactly two rectangles.  To
determine generators of $H_1(T;\ZZ)$, we determine if these
rectangles are glued with or without shearing on $T$.
  
Another obvious closed curve on $T$ besides $\lambda$ is given by a
single component of $P\cap \partial T$.  Call the corresponding slope
$\alpha$.  It does not necessarily generate $H_1(T;\ZZ)$ with
$\lambda$.  Since $\lambda$ intersects $P$ twice, either $\alpha$
intersects $\lambda$ once, in which case $P \cap T$ has two
components, there is no shearing, and $\pp$ is a generator; or
$\alpha$ intersects $\lambda$ twice, and $P\cap T$ has one component.

If $P\cap T$ has one component, then $\alpha=2\pp$, and $\alpha$ is
not a generator with $\lambda$.  Then $\pp$ must project to the same
point as $\oo$ under the covering projection, so $\pp + \oo$ will give
a closed curve on $T$.  Since it has intersection number $1$ with
$2\oo = \lambda$, $\pp+\oo$ will be a generator.
\end{proof}

When $M$ is known to admit a hyperbolic structure, we can find lower
bounds on the lengths of the arcs $\oo$ and $\pp$ in the lattice.
Recall that when a manifold has multiple cusps, lengths depend on a
choice of maximal cusps, i.e. a collection of disjoint horoball
neighborhoods, one for each cusp.  Lengths of arcs are measured on the
horospherical tori that form the boundaries of the horoball
neighborhoods.  To ensure lengths on a torus boundary are long, we
need to ensure that we can choose maximal cusps appropriately.

\begin{theorem}
Let $M$ be a 3--manifold with torus boundary components which admits a
complete finite volume hyperbolic structure, and has the following
additional properties:
\begin{enumerate}
\item $M$ admits an orientation reversing involution $\sigma$ whose
  fixed point set is an embedded surface $P$ in $M$.
\item Boundary components $T_1, \dots, T_t$ of $M$ meet $P$, and for
  each $T_i$, there is a slope $\lambda_i$ that is taken to
  $-\lambda_i$ under $\sigma$.
\end{enumerate}
Let $\{\pp_i, \oo_i\}$ generate the lattice on the universal cover
$\tild{T_i}$ of $T_i$, of intersections of lines which project to $P$
and lines which project orthogonal to $P$, respectively, as in Lemma
\ref{lemma:generators}.  Then there exists a choice of maximal cusps
of $M$ such that, when measured on these maximal cusps, the length of
each $\oo_i$ is at least $1$, and the length of $\pp_i$ is at least
$1/2$.
\label{thm:hyp-result}
\end{theorem}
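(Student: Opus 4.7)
The approach is to pass to the universal cover $\HH^3$ and run a horoball-packing argument, letting the reflection symmetry supply extra forbidden overlaps on the cusp horosphere.

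I would begin by choosing maximal cusp neighborhoods for $M$ that are $\sigma$-equivariant: each $T_i$ meeting $P$ is preserved setwise by $\sigma$ (since $\sigma$ fixes $T_i\cap P\neq\emptyset$ pointwise), while cusps not meeting $P$ can be paired off by $\sigma$ and given equal sizes. Fix attention on one $T_i$, lift to $\HH^3$ placing $T_i$ at the cusp at infinity in the upper half-space model, and normalize so the horoball there is $\{z\geq 1\}$. Because $P$ is the fixed set of an isometric involution, $P$ is totally geodesic, and so the lifts of $P$ through $\infty$ are vertical Euclidean planes. By Lemmas \ref{lemma:lattice} and \ref{lemma:generators} these lifts intersect the horosphere $\{z=1\}$ in parallel lines spaced $|\oo_i|$ apart (in coordinates in which $\pp_i$ points along the lines). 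The corresponding lifts of $\sigma$ act on the horosphere as Euclidean reflections $\rho_k$ across these lines and preserve the horoball.

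For the bound $|\pp_i|\geq 1/2$, observe from Lemma \ref{lemma:generators} that $2\pp_i$ always lies in the cusp subgroup: directly when $\epsilon=0$, and as $2(\pp_i+\oo_i)-2\oo_i$ when $\epsilon=1$. Maximality gives a tangent horoball producing a shadow disk of radius $1/2$ on the horosphere; the standard argument that translates of this disk by the cusp subgroup must be pairwise interior-disjoint forces every nontrivial cusp translation to have length at least $1$. Applied to $2\pp_i$, this yields $|\pp_i|\geq 1/2$.

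For the bound $|\oo_i|\geq 1$, take a tangent horoball $H'$ and its shadow disk $D$ of radius $1/2$ centered at $(x_0,y_0)$. In the generic case $0<x_0<|\oo_i|$, the reflected disks $\rho_0(D)$ at $(-x_0,y_0)$ and $\rho_1(D)$ at $(2|\oo_i|-x_0,y_0)$ are shadows of distinct tangent horoballs obtained by conjugating the covering transformation defining $H'$ by the lifts of $\sigma$. Pairwise interior-disjointness of these three radius-$1/2$ disks then forces both $x_0\geq 1/2$ and $|\oo_i|-x_0\geq 1/2$, giving $|\oo_i|\geq 1$.

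The main obstacle is the degenerate case in which every tangent horoball to the chosen maximal cusp has its shadow center lying on a lift of $P$: the reflections collapse those shadows onto themselves, and the doubling argument alone only delivers $|\oo_i|\geq 1/2$. I would expect to resolve this by exploiting the freedom encoded in the phrase ``there exists a choice of maximal cusps''—reallocating sizes among the cusp neighborhoods so that the tangent horoballs for $T_i$ land off $P$—or equivalently by running the packing argument in the orbifold $M/\sigma$, where a tangent horoball whose shadow sits on a reflector carries a $\ZZ/2$ stabilizer that must be accounted for in the disjointness count. Once this case is handled, the two inequalities combine into the statement of the theorem.
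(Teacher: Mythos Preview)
Your framework and the paper's are the same---horoball packing in $\HH^3$ combined with the reflection in $P$---and your treatment of $|\pp_i|\geq 1/2$ and of the generic case for $|\oo_i|\geq 1$ matches the paper's first step. The genuine gap is exactly where you flag it: the degenerate case in which every tangent horoball to the cusp at infinity is centered on a lift of $P$. Your two proposed escapes are not arguments. ``Reallocating sizes'' cannot move the tangency off $P$ in general: if, say, $T_i$ is self-tangent and the tangency is forced onto $P$ by the symmetry, no $\sigma$-equivariant resizing will change that. The orbifold idea is a plausible heuristic, but a horoball tangent along the reflector has $\ZZ/2$ stabilizer and contributes only a half-disk shadow; naively this recovers only $|\oo_i|\geq 1/2$, and you have not indicated what additional input would upgrade it to $1$.

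The paper closes this gap with two ideas you are missing. First, rather than taking an arbitrary $\sigma$-equivariant maximal cusp, it expands the cusps $T_1,\dots,T_t$ \emph{simultaneously keeping all $|\oo_j|$ equal}; thus the first tangency is between two of these distinguished cusps, and after conjugating the second cusp to infinity the horosphere there has the \emph{same} height $c$. Second, with the tangent horoball $H$ centered at $0$ (on the plane $y=0$) and the swap isometry $S$ sending $0\leftrightarrow\infty$ and normalizing the $P$-planes at the other cusp to $y=n$, the point $q=S^{-1}(i)$ lies on a hemispherical lift of $P$ through $0$ of Euclidean diameter at most $1$ (it cannot cross $y=1$). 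The perpendicular from $q$ to the vertical geodesic over $0$ lands at height $|q|\leq 1<c$, hence inside $H$; but $S$ carries this landing point to height exactly $1$ while carrying $H$ to the horoball of height $c>1$, a contradiction. This swap-and-compare argument is the missing idea, and it relies essentially on the equal-$\oo$ expansion rather than mere $\sigma$-equivariance.
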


Similar results were shown for particular classes of links in $S^3$ in
\cite{purcell:slopes}, using techniques of Adams \emph{et al.}
\cite{adams:II}.  We give a different proof here.

\begin{proof}
By Mostow--Prasad rigidity, the involution of $M$ is isotopic to an
isometry of $M$ under the hyperbolic metric.  The surface $P$, since
it is fixed pointwise, is isotopic to a totally geodesic surface in
$M$ (see for example \cite{menasco-reid}, \cite{leininger}).

Lift to the universal cover $\HH^3$, which we view as the upper half
space $\HH^3=\{(x,y,z)|z>0\}$.  For any $j$, we may conjugate such
that the cusp corresponding to $T_j$ lifts to the point at infinity.
The surface $P$ lifts to a collection of disjoint, totally geodesic
planes.

Since $P$ meets the cusp corresponding to $T_j$, copies of $P$ will
lift in $\HH^3$ to parallel vertical planes through infinity.  Because
$P$ is fixed under the involution $\sigma$, the collection of parallel
vertical planes must be preserved by a reflection of $\HH^3$ in any
one of the planes.  Hence the (Euclidean) distance between any two
adjacent planes must be constant.  Without loss of generality, we will
conjugate such that these vertical planes are the planes $y=n$, $n\in
\ZZ$, in $\HH^3 = \{(x,y,z)|z>0\}$, so that their Euclidean distance
is $1$.

The length of $\oo_j$ will be given by $1/c$, where $c$ is the
height of the horosphere bounding the horoball about infinity.  We
will show that we can always take $c$ to be less than or equal to $1$.

Define the horoball expansion about cusps of $T_1,
\dots, T_t$ such that the lengths of the $\oo_j$ agree for every $j$
simultaneously.  That is, there exists some (possibly large) $c$ such
that when each $\oo_j$ has length $1/c$, the horoballs about the cusps
corresponding to $T_1, \dots, T_t$ are disjoint.  Continue to increase
$c$ keeping all the $\oo_j$ of equal length, until the value $1/c$ is
as large as possible.  If there are remaining cusps disjoint from the
$T_j$, these may then be expanded in any way.

To prove the theorem, we must prove that the value of $c$ which
maximizes the length of the $\oo_j$ is less than or equal to $1$.

Suppose not.  Suppose $c>1$.  Since $c$ is minimal, horoballs about
cusps corresponding to some $T_i$ and $T_j$ must abut.  Conjugate such
that the cusp corresponding to $T_i$ is at infinity in $\HH^3$, with
lifts of $P$ corresponding to the planes $y=n, n\in\ZZ$.  The horoball
about infinity will have height $c$.  It will be tangent to some
horoball $H$ over a point $w$ on the boundary $\CC = \{(x,y,0)\}$ of
$\HH^3$, where $w$ projects to the cusp corresponding to $T_j$.  Since
$c>1$, note $H$ is a ball of Euclidean diameter $c>1$.

Because the diameter of $H$ is greater than $1$, $H$ must intersect a
plane $y=n$.  Because the reflection through the plane $y=n$ projects
to an isometry of $M$, the image of $H$ under this reflection must be
a horoball in $\HH^3$ disjoint from all other horoballs in the lift of
the maximal cusps.  Thus if $H$ lies over some point $w$ which is
\emph{not} on the plane $y=n$, then the image of $H$ under the
reflection through $y=n$ will give a horoball distinct from $H$, which
intersects $H$.  This is impossible.

So $H$ is centered at a point $w\in \CC$ which lies on a plane $y=n$.
Without loss of generality, assume $w=0$.  Thus we are assuming $0$
projects to some cusp corresponding to $T_j$ under the covering map.

Now consider $T_j$.  There is some isometry $S$ of $\HH^3$ taking $0$
to infinity and infinity to $0$, and taking lifts of $P$ which meet
the cusp $T_j$ to planes $y=n, n\in\ZZ$.  Note by the definition of
our horoball expansion, this isometry $S$ takes $H$ to a horoball of
height $z=c>1$ about infinity.

Consider $q = S^{-1}(i) = S^{-1}((0,1,0))$ on the boundary $\CC$ of
$\HH^3$.  This point lies on the boundary of some plane $Q$ of $\HH^3$
which projects to $P$ under the covering map.  This plane $Q$ is a
Euclidean hemisphere tangent to the plane $y=0$.  It has diameter at
most $1$, since it cannot intersect the plane $y=1$, which also
projects to $P$ under the covering map.

Consider the vertical geodesic in $\HH^3$ lying above $0$ in $\CC$.
There is a unique geodesic $\gamma$ from the point $q$ which meets
this vertical geodesic at a right angle.  The point $r$, where
$\gamma$ intersects the vertical geodesic, is of (Euclidean) height
$|q|$, where $|q|$ denotes the (Euclidean) distance of $q$ from $0$.
Because $q$ lies on the circle $Q$ of diameter at most $1$, $|q|$ is
at most $1$.  Because $H$ is of diameter $c>1$, $r$ must be contained
in $H$.  See Figure \ref{fig:r-in-H}.

\begin{figure}
  \input{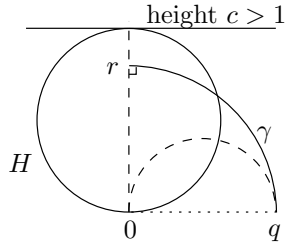}
  \caption{Note $r$ is contained in the horosphere $H$.}
\label{fig:r-in-H}
\end{figure}

But now consider the effect of the isometry $S$ on the geodesic
$\gamma$.  Since $S$ preserves the vertical geodesic above $0$ in
$\CC$, $S$ must take $\gamma$ to a geodesic from $S(q) = i \in \CC$ to
one meeting the vertical geodesic above $0$ at a right angle.  Thus
$S(r)$ will be of height exactly $1$.  On the other hand, $S(H)$ is of
height $c>1$, and $S(H)$ must contain $S(r)$.  This is impossible.

Thus all horoballs can be expanded to height $c\leq 1$.  It follows
that each $\oo_i$ has length at least $1$.

Finally, $\pp_i$ or $2\pp_i$ projects to a closed curve on $T_i$.
Hence translation along $\pp_i$ or $2\pp_i$ is a covering
transformation.  It must take a maximal horoball centered at a point
on $\CC$ to a disjoint maximal horoball.  Thus the translation length
is at least $1$, so $\pp_i$ has length at least $1/2$.
\end{proof}

We wish to study what happens when we twist along the disks $D_1,
\dots, D_t$, i.e. when we perform Dehn filling on slopes $1/n_1,
\dots, 1/n_t$ on the cusps corresponding to $C_1, \dots, C_t$,
respectively.  First, we give the following result about the lengths
of such slopes.  Note the following theorem applies to links in
general 3--manifolds, not just $S^3$.

\begin{prop}
	Let $L = C_1 \cup \dots C_t$ be a link in a $3$--manifold $M$, such
	that $M-L$ admits a complete, finite volume hyperbolic structure,
	admits an orientation reversing involution $\sigma$ whose fixed
	point set is a surface $P$, and for each component $C_i$ of $L$,
	there is a slope $\lambda_i$ taken to $-\lambda_i$ by $\sigma$.

	Let $\mu_i$ be the other generator of $H_1(\partial N(C_i))$ as in
	Lemma \ref{lemma:generators}.  Then the slope $\mu_i + n_i
	\,\lambda_i$
	has length at least $\sqrt{(1/4) + c_i^2}$.  Here:
\begin{enumerate}
	\item $c_i = 2|n_i|$ if $P \cap \partial N(C_i)$ consists of two
	curves, or
	\item $c_i = 2|n_i|-1$ if $P \cap \partial N(C_i)$ consists of one
	curve. 
\end{enumerate}
\label{prop:slope-lengths}
\end{prop}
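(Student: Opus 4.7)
The plan is to express the slope $\mu_i + n_i\lambda_i$ in the orthogonal basis $\{\pp_i, \oo_i\}$ provided by Lemma \ref{lemma:generators}, and then bound its Euclidean length on a maximal cusp using Theorem \ref{thm:hyp-result}. Since the lines in $\widetilde{T_i}$ coming from $P$ and those coming from $\lambda_i$ meet orthogonally, $\pp_i$ and $\oo_i$ are perpendicular vectors in the flat metric on $T_i$ inherited from any choice of horospherical cross section. Consequently, for any integer coefficients $a,b$, the length of $a\,\pp_i + b\,\oo_i$ satisfies $|a\,\pp_i + b\,\oo_i|^2 = a^2|\pp_i|^2 + b^2|\oo_i|^2$.

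First I would invoke Theorem \ref{thm:hyp-result} to fix a choice of maximal horoball neighborhoods on which $|\oo_i| \geq 1$ and $|\pp_i| \geq 1/2$ simultaneously for all cusps corresponding to components $C_i$. Lemma \ref{lemma:generators} identifies $\lambda_i = 2\oo_i$ and tells us that the other generator $\mu_i$ of $H_1(\partial N(C_i))$ can be written as $\pp_i + \epsilon_i\,\oo_i$, where $\epsilon_i \in \{0,1\}$ records whether $P \cap \partial N(C_i)$ has two components or one.

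Next I would write the slope as
\[
  \mu_i + n_i\lambda_i \;=\; \pp_i + (\epsilon_i + 2n_i)\,\oo_i.
\]
In the two-component case ($\epsilon_i = 0$) the coefficient of $\oo_i$ is $2n_i$, whose absolute value is exactly $2|n_i| = c_i$. In the one-component case ($\epsilon_i = 1$), the subtle point is that the generator $\pp_i + \oo_i$ is only defined modulo $\lambda_i = 2\oo_i$, so we may equally well take $\mu_i = \pp_i - \oo_i$; choosing the sign that minimizes the coefficient of $\oo_i$ (equivalently, absorbing the appropriate multiple of $\lambda_i$ into $n_i$) gives the optimal value $|2n_i| - 1 = c_i$ (for $n_i\neq 0$). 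Either way,
\[
  |\mu_i + n_i\lambda_i|^2 \;=\; |\pp_i|^2 + c_i^2\,|\oo_i|^2 \;\geq\; \tfrac{1}{4} + c_i^2.
\]

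I expect the main (and really only) obstacle to be carefully distinguishing these two cases, in particular explaining why in the one-component case one can always arrange the coefficient of $\oo_i$ to be $2|n_i| - 1$ rather than $2|n_i| + 1$ by reselecting the representative of the $H_1$-generator; the geometric content, reducing the problem to a Pythagorean computation in the orthogonal basis $\{\pp_i,\oo_i\}$, is immediate once Theorem \ref{thm:hyp-result} has been applied.
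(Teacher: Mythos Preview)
Your proposal is correct and follows essentially the same route as the paper: express $\mu_i + n_i\lambda_i$ in the orthogonal basis $\{\pp_i,\oo_i\}$ of Lemma~\ref{lemma:generators}, then apply the length bounds of Theorem~\ref{thm:hyp-result} and the Pythagorean theorem. The only cosmetic difference is in the one-component case: the paper simply writes $\mu_i+n_i\lambda_i=\pp_i+(1+2n_i)\oo_i$ and uses the elementary inequality $(1+2n_i)^2\geq(2|n_i|-1)^2$ directly, rather than reselecting the generator as you do; both arguments arrive at the same bound.
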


\begin{proof}
$M-L$ fits the requirements of the lemmas above.  So in particular, by
Lemma \ref{lemma:generators}, $H_1(\partial N(C_i);\ZZ)$ is generated
by $2\oo_i$ and $\pp_i + \epsilon_i \,\oo_i$; the generator $2\oo_i$
corresponds to the curve $\lambda_i$; if $P \cap \partial N(C_i)$ has
two components, then one such component is a generator $\pp_i =
\mu_i$; and if $P \cap \partial N(C_i)$ has one component, then the
other generator is $\pp_i + \oo_i = \mu_i$.

Suppose first that $P \cap \partial N(C_i)$ has two components.  Then
the slope $\mu_i + n_i\, \lambda_i$ is given by $\pp_i + n_i\,
(2\,\oo_i)$.  Since $\pp_i$ and $\oo_i$ are orthogonal, by Theorem
\ref{thm:hyp-result} this slope has length at least
$\sqrt{(1/4) + 4\,n_i^2} = \sqrt{(1/4) + c_i^2}.$

Now suppose that $P \cap \partial N(C_i)$ has one component.  Then the
slope $\mu_i + n_i \,\lambda_i$ is given by $\pp_i + \oo_i +
n_i\,(2\,\oo_i) = \pp_i + (1 + 2\,n_i)\oo_i$.  It must have length at
least $\sqrt{(1/4) + (1-2|n_i|)^2} = \sqrt{(1/4) + c_i^2}.$
\end{proof}

\begin{define}
If $P \cap \partial N(C_i)$ consists of one curve, as in case (2) of
Proposition \ref{prop:slope-lengths}, we say there is a \emph{half--twist}
at $D_i$.
\label{def:half-twist-general}
\end{define}

This terminology comes from considering a neighborhood of $D_i$ in
$M$.  In this neighborhood, a half--twist at $D_i$ is identical to a
neighborhood of a half--twist of an augmented link in $S^3$, as in
Definition \ref{def:half-twist}.  See Figure \ref{fig:half-twist}.

Two half--twists in a row in a neighborhood of $D_i$ again yields a
full--twist in this neighborhood.  Thus Proposition
\ref{prop:slope-lengths} implies that the squared length of the slope
$\mu_i + n_i\lambda_i$ on $C_i$ is at least one more than the squared
number of half--twists inserted at $D_i$.

\begin{figure}
	\begin{center}
	\includegraphics{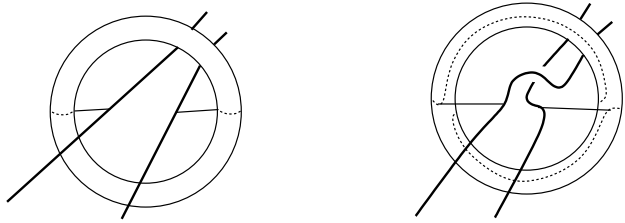}
	\end{center}
\caption{Left: $P\cap \partial N(C_i)$ has two components, shown in
	dotted lines.  Right: $P\cap \partial N(C_i)$ has one component,
	giving a half--twist. }
\label{fig:half-twist}
\end{figure}

\begin{theorem}
Let $K$ be a knot or link in $S^3$ which has a diagram $D$ and a
maximal twist region selection with at least $6$ half--twists in each
generalized twist region, and such that the corresponding augmentation
is hyperbolic.  Then $S^3-K$ is also hyperbolic.
\label{thm:hyp-knot}
\end{theorem}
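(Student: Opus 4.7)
The plan is to realize $S^3-K$ as a Dehn filling of the augmented complement $S^3-L$ and to conclude hyperbolicity via the Agol--Lackenby $6$-theorem. Since the augmentation $L$ is hyperbolic by hypothesis, $S^3-L$ admits a complete finite volume hyperbolic structure, and by Proposition \ref{prop:reflect} it admits a reflection involution whose fixed set meets each crossing circle cusp in a slope $\lambda_i$ satisfying $\sigma(\lambda_i)=-\lambda_i$. Thus the hypotheses of Proposition \ref{prop:slope-lengths} are met. As discussed in Section \ref{sec:character}, $S^3-K$ is recovered from $S^3-L$ by Dehn filling each cusp $\partial N(C_i)$ along the slope $\mu_i+n_i\lambda_i$, where $n_i$ records the number of full-twists removed at the $i$th generalized twist region during augmentation.

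Next I would translate the hypothesis of at least $6$ half-twists into a lower bound on the filling slope length. Let $N_i\geq 6$ denote the number of half-twists in the $i$th region of $D$. If $L$ has no half-twist remaining at $C_i$ (case (1) of Proposition \ref{prop:slope-lengths}), then $N_i=2|n_i|$, so $c_i=N_i\geq 6$ and the slope length is at least $\sqrt{1/4+N_i^2}\geq\sqrt{36.25}>6$. If $L$ has a single half-twist remaining (case (2)), then $N_i=2|n_i|+1$; here the retained half-twist has the same sense as the removed full-twists, so revisiting the proof of Proposition \ref{prop:slope-lengths} the coefficient of $\oo_i$ in the filling slope is $1+2n_i$ with $n_i$ of the matching sign, giving slope length at least $\sqrt{1/4+(2|n_i|+1)^2}=\sqrt{1/4+N_i^2}>6$ as well. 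Throughout, lengths are measured on the choice of maximal horocusps supplied by Theorem \ref{thm:hyp-result}.

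Finally, once every filling slope has length strictly greater than $6$ on an embedded horocusp neighborhood of $S^3-L$, the $6$-theorem of Agol and Lackenby, combined with Thurston's geometrization, implies that $S^3-K$ is irreducible, atoroidal and not Seifert fibered, hence hyperbolic. The main obstacle is the slope length estimate in case (2): the worst-case bound $c_i=2|n_i|-1$ stated in Proposition \ref{prop:slope-lengths} only yields slope length $\sqrt{1/4+(N_i-2)^2}$, which can be as small as $\sqrt{25.25}<6$ when $N_i=7$. One must therefore exploit the sign alignment between the half-twist retained in $L$ and the full-twists removed at the same crossing circle to upgrade to the sharper bound $\sqrt{1/4+N_i^2}$. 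Establishing this sign compatibility is the only subtle point; the remainder of the argument is a direct application of the $6$-theorem.
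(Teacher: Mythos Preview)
Your approach is correct and essentially the same as the paper's: realize $S^3-K$ as a Dehn filling of the hyperbolic augmentation $S^3-L$, use Proposition~\ref{prop:slope-lengths} to show each filling slope has length strictly greater than $6$, and invoke the $6$-theorem. For the odd case you flag, the paper simply takes $n_i=(c_i+1)/2$ so that the stated bound $2|n_i|-1=c_i$ from Proposition~\ref{prop:slope-lengths} applies directly; this is precisely your sign-alignment observation, packaged as a choice of $n_i$ rather than as a re-entry into the proposition's proof.
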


\begin{proof}
The augmentation is a link with hyperbolic complement, by assumption.
It admits an orientation reversing involution $\sigma$ fixing a
surface $P$, and the cusps corresponding to crossing circles each have
a slope $\lambda_i$ which is taken to $-\lambda_i$ by $\sigma$:
namely, the slope of the longitude of the crossing circle.

The original knot or link complement is obtained from this link
complement by Dehn filling slopes on crossing circles.  The longitude
of a crossing circle is given by $\lambda_i$.  The meridian is the
generator $\mu_i$ of Proposition \ref{prop:slope-lengths}.  If the
knot has $c_i$ half twists in the $i$-th twist region, then the Dehn
filling slope is $\mu_i + n_i \lambda_i$, where $n_i = c_i/2$ if $c_i$
is even, $n_i = (c_i+1)/2$ if $c_i$ is odd.

By Proposition \ref{prop:slope-lengths}, the slope of the Dehn filling
has length at least $\sqrt{(1/4)+c_i^2} > 6$, since the diagram of $K$
has at least $6$ half--twists in each generalized twist region.  Thus
by the $6$--Theorem (\cite{agol:bounds}, \cite{lackenby:word}), the
manifold resulting from Dehn filling is hyperbolic.
\end{proof}

\section{Volumes}

The existence of a reflection gives information about the volumes of
augmented links as well.  Theorem \ref{thm:hyp-vol}, below, is an
immediate generalization of a similar theorem in \cite{fkp}.

\begin{lemma}
Let $K$ be a knot or link in $S^3$ which has a diagram $D$ and a
maximal twist region selection such that the corresponding
augmentation yields a link $L$ in $S^3$ whose complement is
hyperbolic.  Then the volume satisfies
$$ \vol(S^3-L) \geq 2\,v_8\,(\Tw(D)-1), $$
where $v_8 \approx 3.66386$ is the volume of a regular hyperbolic
octahedron, and $\Tw(D)$ is the number of generalized twist regions of
the maximal twist region selection of $D$.
\label{lemma:vol-refl}
\end{lemma}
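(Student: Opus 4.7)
The plan is to split $S^3-L$ along the fixed surface $P$ of the involution from Proposition \ref{prop:reflect} into two isometric pieces with totally geodesic boundary, and then apply Miyamoto's volume inequality for hyperbolic $3$--manifolds with totally geodesic boundary. This is exactly the strategy used in \cite{fkp}; the task here is to verify that it still works when generalized twist regions are allowed.

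First, since $S^3-L$ is hyperbolic, Mostow--Prasad rigidity lets us realize the involution $\sigma$ of Proposition \ref{prop:reflect} as an isometry; its fixed point set $P$ then becomes a totally geodesic surface (as in \cite{menasco-reid}, \cite{leininger}). Cutting $S^3-L$ along $P$ produces two hyperbolic $3$--manifolds with totally geodesic boundary, interchanged by $\sigma$ and thus isometric. Denoting either piece by $N$, we have $\vol(S^3-L)=2\vol(N)$ and $\partial N$ is (a copy of) the totally geodesic surface $P$, together with torus/annular cusps inherited from $S^3-L$.

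Next I would invoke Miyamoto's theorem: a finite-volume hyperbolic $3$--manifold $N$ with non-empty totally geodesic boundary satisfies
\[
\vol(N) \;\geq\; -v_8\,\chi(N),
\]
with $v_8$ the volume of a regular ideal octahedron. Combining this with the previous step gives $\vol(S^3-L)\geq -2v_8\,\chi(N)$, so it remains only to show $\chi(N)=-(\Tw(D)-1)$. Since $N$ is a compact $3$--manifold with boundary (once cusp neighbourhoods are removed), and the toroidal/annular cusp boundary contributes zero to Euler characteristic, we have $\chi(N)=\tfrac{1}{2}\chi(P)$. Thus the problem reduces to computing $\chi(P)$, and the claim $\chi(P)=-2(\Tw(D)-1)$ would finish the argument.

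The one thing to check carefully, and the main obstacle, is the Euler-characteristic calculation. The surface $P$ is built from the pieces of the projection plane lying outside the half-twists, glued to the ribbon surfaces inside half-twists and to the half-twisted surfaces between each $C_i$ and the strands it encloses. For a regular augmented link this is precisely the count carried out in \cite{fkp}, giving $\chi(P)=-2(\Tw(D)-1)$. In the generalized setting one has to verify that passing from a regular twist region to a generalized twist region containing multiple strands does not change the local contribution of a crossing circle to $\chi(P)$: the only change to $P$ inside a crossing-circle neighborhood is the passage from a two-component intersection $P\cap \partial N(C_i)$ to a one-component intersection (a half-twist), together with additional ribbon strips carrying the parallel strands, and each of these local modifications preserves $\chi$. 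Once this local check is made, the count of $\chi(P)$ is identical to the regular case of \cite{fkp}, and the volume bound follows.
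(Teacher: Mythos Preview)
Your strategy is exactly the paper's: realize $\sigma$ as an isometry so that $P$ is totally geodesic, cut along $P$, and apply Miyamoto's inequality $\vol(N)\geq -v_8\,\chi(N)$.  The only substantive step is the Euler-characteristic computation, and here your route diverges from the paper's and is left incomplete.

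You reduce to computing $\chi(P)$ via $\chi(N)=\tfrac12\chi(\partial N)$, then defer to \cite{fkp} for the two-strand case and assert that adding parallel strands is a ``local modification that preserves~$\chi$''.  You never carry out that check, and doing it by staring at $P$ is actually a bit delicate: the knot strands lie \emph{in} $P$, so the surface $P$ changes globally (more boundary circles on the $K_j$ cusps, more punctures on each crossing disk) as the strand count increases, and you would have to verify these contributions cancel.  The paper sidesteps this entirely by computing $\chi(N)$ directly from the topology of the cut pieces rather than from $P$.  In the half-twist-free case, cutting along the projection sphere yields two balls with $\Tw(D)$ arcs (the halves of the crossing circles) removed---a genus-$\Tw(D)$ handlebody on each side---so $\chi(N)=2(1-\Tw(D))$ immediately, and the number of $K_j$-strands never enters because those strands lie on the boundary sphere.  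In the half-twist case the paper uses the two-polyhedron decomposition of $S^3-L$ and retracts $(S^3-L)\smallsetminus P$ onto a graph with two vertices and $2\Tw(D)$ edges (one per half crossing disk), again giving $\chi=2-2\Tw(D)$ with no reference to strand count.  That is the missing ingredient in your write-up: rather than chasing $\chi(P)$ through local modifications, identify $N$ topologically and read off $\chi(N)$.

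A smaller point: you assert that cutting along $P$ produces two pieces interchanged by~$\sigma$.  For augmented links in $S^3$ this is in fact true (the involution extends to $S^3$, where its fixed $2$-sphere separates), but the paper is agnostic and works with the full cut manifold ``(possibly disconnected)'', which is safer and costs nothing.
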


\begin{proof}
By assumption, $S^3-L$ admits a complete hyperbolic structure.  By
Proposition \ref{prop:reflect}, it admits a reflective symmetry.  Thus
$S^3-L$ contains a surface $P$ fixed pointwise under the reflection.

Cut $S^3-L$ along this surface. This produces a (possibly
disconnected) manifold $N$ with totally geodesic boundary.  By a
theorem of Miyamoto \cite{miyamoto}, the volume of $N$ is at least
$-v_8\,\chi(N)$, where $\chi(N)$ denotes the Euler characteristic of
$N$.

Now, in the case that $P$ is the projection plane (i.e. no
half--twists), cutting along $P$ splits $S^3-L$ into two balls, with
half arcs corresponding to crossing circles drilled out of the ball.
This is a handlebody.  Since there are $\Tw(D)$ crossing circles, the
genus of the handlebody is $\Tw(D)$.  Thus we obtain the volume
estimate:
$$ \vol(S^3-L) \geq 2\,v_8\,(\Tw(D)-1). $$

When the diagram has half--twists, let $L'$ denote the link obtained
by removing all half--twists from the diagram of $L$.  Topologically,
$S^3-L'$ is obtained from $S^3-L$ by cutting $S^3-L$ along the disks
bounded by crossing circles, and regluing with a half--twist.

Note $S^3-L'$ has the following description as a gluing of ideal
polyhedra.  Cut $S^3-L'$ along the projection plane.  This slices each
of the disks bounded by crossing circles in half.  Now cut along each
of these half disks and pull the disks apart.  See Figure
\ref{fig:top-decomp}.

\begin{figure}
\includegraphics{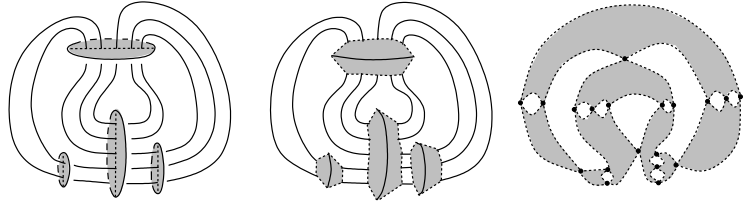}
\caption{Decomposing $S^3-L'$ into ideal polyhedra. First, cut along
	$P$.  Second, cut along half disks.  Finally, shrink remaining link
	components to ideal vertices.}
\label{fig:top-decomp}
\end{figure}

This separates $S^3-L'$ into two identical ideal polyhedra with faces
given by crossing disks and by the projection plane.  We may glue
these polyhedra back in the manner in which we cut them to obtain
$S^3-L'$.  We may also change the gluing on crossing disks only to
obtain $S^3-L$, as follows.  Rather than glue crossing disks straight
across where $L$ has a half--twist, glue a half crossing disk on one
polyhedron to the opposite half crossing disk on the opposite
polyhedron, inserting the half--twist.  See Figure
\ref{fig:poly-half-twist}.

\begin{figure}
\input{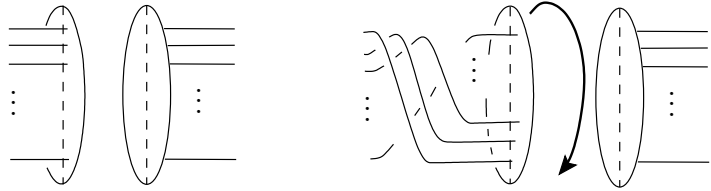}
\caption{Left: Gluing without a half twist.  Right: Inserting a half--twist.}
\label{fig:poly-half-twist}
\end{figure}

Compute the Euler characteristic of the cut manifold $(S^3-L)-P$ by
reading it off this polyhedral decomposition.  Since $(S^3-L)-P$ has
boundary, it retracts onto a one--skeleton.  Build the one--skeleton
by including a vertex for each ideal polyhedron (two vertices).  Edges
run through the half crossing disks which we glue.  There will be one
edge per glued pair of half crossing disks.  Since there are $\Tw(D)$
crossing disks, the Euler characteristic is $2 - 2\Tw(D)$.  Thus by
Miyamoto's theorem, the volume satisfies: $\vol(S^3-L) \geq
2\,v_8\,(\Tw(D)-1).$
\end{proof}

Lemma \ref{lemma:vol-refl} should be compared to Proposition 3.1 of
\cite{fkp}.  The proof above is an immediate extension of the proof of
that theorem to this more general case.  For links with two strands
per twist region, we showed in \cite{fkp} that Lemma
\ref{lemma:vol-refl} is sharp.

In general, when crossing circles have more than two strands per twist
region, Lemma \ref{lemma:vol-refl} seems to actually be far from
sharp.  With Futer and Kalfagianni we have been able to develop better
bounds on volumes of a certain class of knots \cite{fkp:coils}.
Meanwhile, Lemma \ref{lemma:vol-refl} gives a working lower bound on
volumes.

\begin{theorem}
Let $K$ be a knot or link in $S^3$ which has a diagram $D$ and a
maximal twist region selection with at least $7$ half--twists in each
generalized twist region, and such that the corresponding augmentation
is hyperbolic. Let $\Tw(D)$ denote the number of generalized twist
regions in the maximal twist region selection.  Then \[ \vol(S^3-K)
\geq 0.64756\,(\Tw(D) -1). \]
\label{thm:hyp-vol}
\end{theorem}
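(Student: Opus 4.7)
The plan is to combine three ingredients already in place: the volume estimate for the augmentation $L$ (Lemma \ref{lemma:vol-refl}), the slope length estimate for the filling slopes (Proposition \ref{prop:slope-lengths}), and the Dehn filling volume bound of Futer, Kalfagianni, and the author in \cite{fkp}.

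First, since $7 > 6$, Theorem \ref{thm:hyp-knot} applies and $S^3-K$ is hyperbolic. Topologically, $S^3-K$ is obtained from $S^3-L$ by Dehn filling along the cusps corresponding to the crossing circles $C_1,\dots,C_t$ with slopes $\mu_i + n_i\,\lambda_i$, where $c_i$ is the number of half--twists in the $i$th generalized twist region and $n_i = c_i/2$ if $c_i$ is even, $n_i = (c_i+1)/2$ if $c_i$ is odd. Proposition \ref{prop:slope-lengths} gives the length of each such slope to be at least $\sqrt{1/4 + c_i^2}$; the bound is uniform in the parity of $c_i$, since case (1) of the proposition applies when $c_i$ is even and case (2) applies (with $2|n_i|-1 = c_i$) when $c_i$ is odd, producing the same quantity in both cases.

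With $c_i \geq 7$, every filling slope therefore has length at least $\sqrt{1/4 + 49} = \sqrt{49.25}$, which exceeds $2\pi$ since $4\pi^2 \approx 39.478 < 49.25$. This is why seven rather than six half--twists are required: $c_i = 7$ is the smallest value for which $\sqrt{1/4+c_i^2} > 2\pi$, the threshold at which the Dehn filling theorem of \cite{fkp} becomes available. That theorem states: if every filling slope has length at least $\ell_{\min} > 2\pi$ when measured on a choice of maximal cusps, then the filled manifold is hyperbolic and
$$\vol(M_{\mathrm{filled}}) \;\geq\; \left(1 - \left(\frac{2\pi}{\ell_{\min}}\right)^2\right)^{3/2}\vol(M).$$
Applying this with $\ell_{\min} = \sqrt{49.25}$ and combining with Lemma \ref{lemma:vol-refl}, we obtain
$$\vol(S^3-K) \;\geq\; \left(1 - \frac{4\pi^2}{49.25}\right)^{3/2}\cdot 2\,v_8\,(\Tw(D)-1),$$
and a direct numerical evaluation shows the leading constant exceeds $0.64756$, completing the proof.

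The argument is largely an assembly of previously established results, so there is no genuine obstacle; the only point requiring care is ensuring the maximal cusps producing the slope length bound of Proposition \ref{prop:slope-lengths} are the same maximal cusps used in the hypothesis of the Dehn filling theorem. This is automatic, since both results rely on the maximal cusp construction of Theorem \ref{thm:hyp-result}.
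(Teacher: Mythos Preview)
Your proof is correct and follows essentially the same approach as the paper: bound $\vol(S^3-L)$ via Lemma \ref{lemma:vol-refl}, bound the filling slope lengths via Proposition \ref{prop:slope-lengths} to get $\ell_{\min}\geq\sqrt{49.25}>2\pi$, and apply the Dehn filling volume theorem of \cite{fkp}. The only incidental difference is that you invoke Theorem \ref{thm:hyp-knot} separately for hyperbolicity of $S^3-K$, whereas the paper obtains hyperbolicity directly from the conclusion of the \cite{fkp} theorem; both are fine.
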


\begin{proof}
Let $L$ be the augmentation, $S^3-L$ hyperbolic, by assumption.  By
Lemma \ref{lemma:vol-refl}, the volume satisfies:
$$\vol(S^3-L) \geq 2\,v_8\,(\Tw(D)-1).$$

Now, $S^3-K$ is obtained by Dehn filling $S^3-L$.  Since there are at
least $7$ half--twists per twist region, by Proposition
\ref{prop:slope-lengths}, the Dehn filling is along slopes of length
at least $\sqrt{49.25} > 2\pi$.  Apply Theorem 1.1 of \cite{fkp}.
This theorem states that if $M$ is a hyperbolic manifold, and $s_1,
\dots, s_k$ are slopes on cusps of $M$ with minimum length
$\ell_{min}$ at least $2\pi$, then the Dehn filled manifold $M(s_1,
\dots, s_k)$ is hyperbolic with volume bounded below by
$$ \vol(M(s_1,\dots, s_k)) \geq
\left(1-\left(\frac{2\pi}{\ell_{min}}\right)^2\right)^{3/2}
\,\vol(M). $$

In our case, $\ell_{min} \geq \sqrt{49.25}$ and the volume of the
unfilled manifold $S^3-L$ satisfies $\vol(S^3-L) \geq
2\,v_8\,(\Tw(D)-1)$.  Thus the volume of $S^3-K$ satisfies
\begin{eqnarray*}
	\vol(S^3-K) &\geq&
\left(1-\left(\frac{2\pi}{\sqrt{49.25}}\right)^2\right)^{3/2}
\,2\,v_8\,(\Tw(D)-1)\\ &> & 0.64756\,(\Tw(D)-1).
\end{eqnarray*}
\end{proof}

\section{Geodesics}

We now give information on classes of geodesics in knot complements.
Our tools are those of cone manifolds and cone deformations.  We
briefly review the definitions and results we use.

\begin{define}
	A \emph{hyperbolic cone manifold} is a $3$--manifold $M$ and a link
	$\Sigma$ in $M$ such that $M-\Sigma$ admits an incomplete hyperbolic
	metric, with cone singularities along $\Sigma$.  That is, a
	neighborhood of $\Sigma$ in $M$ has a metric whose cross section is
	a hyperbolic cone, with cone angle $\alpha$ at the core.

	A \emph{hyperbolic cone deformation} is a one--parameter family of
	hyperbolic cone manifold structures on $M-\Sigma$.  
\end{define}

In special cases, a Dehn filling can be realized geometrically as a
cone deformation, as follows.  Suppose $M$ is a $3$--manifold with
torus boundary which admits a complete hyperbolic metric.  Let $s$ be
a slope on $\partial M$.  Then we may view the complete hyperbolic
structure on $M$ as a hyperbolic cone manifold structure on $M(s)$
with cone angle zero along the link at the core of the attached solid
torus in $M(s)$.

We may always increase the cone angle from $\alpha=0$ to
$\alpha=\varepsilon$, for some $\varepsilon>0$ via cone deformation,
by work of Hodgson and Kerckhoff \cite{hk:rigid}.
When $\alpha=\varepsilon$, in the hyperbolic cone metric, the slope $s$
will bound a singular disk.  That is, a representative of $s$ can be
isotoped to bound a disk $D$ which admits a smooth hyperbolic metric
everywhere except at the core of $D$, where $D$ intersects the
singular locus $\Sigma$.  Thus this manifold with the hyperbolic cone
metric is homeomorphic to $M(s)$.

In case there is a cone deformation starting at cone angle $\alpha=0$
and extending to $\alpha=2\pi$, the final structure when $\alpha=2\pi$
gives a complete, non-singular hyperbolic metric on the manifold
$M(s)$.  In this case, we say the Dehn filling is \emph{realized by
cone deformation}.

The benefit of a cone deformation is that one obtains some geometric
control on the hyperbolic structure of the manifold.  In particular,
when we have a single filling slope, the core of the Dehn filled solid
torus is a closed geodesic in the hyperbolic structure given by cone
angle $\alpha=2\pi$.  Thus this core is isotopic to a geodesic
provided we can show a Dehn filling is realized by cone deformation.

Hodgson and Kerckhoff analyzed conditions which guarantee the
existence of a cone deformation \cite{hk:univ}.  We will apply their
results, but first we need the following definition.

\begin{define}
	Let $M$ be a $3$--manifold with torus boundary $\partial M = T$
	admitting a complete hyperbolic metric.  Let $s$ be a slope on $T$.
	In the hyperbolic structure on $M$, $T$ becomes a cusp.  Take any
	embedded horoball neighborhood of this cusp and consider its
	boundary.  This inherits a Euclidean metric from the hyperbolic
	structure on $M$.  Thus we may measure the length of $s$ and the
	area of the Euclidean torus $T$ with respect to this metric.

	Define the \emph{normalized length} of $s$ to be
	\[ \ell_{norm}(s) = \frac{\rm{length}(s)}{\sqrt{\rm{area(T)}}}. \]
	Here $\rm{length}(s)$ the length of a geodesic representing $s$.
	Note that unlike the lengths of Theorem \ref{thm:hyp-result}, the
	normalized length of a slope is independent of choice of horoball
	neighborhood about the cusp corresponding to $T$.
\end{define}

The following is a consequence of Theorem 1.2 of \cite{hk:shape}.

\begin{theorem}[(Hodgson--Kerckhoff)]
Consider a complete, finite volume hyperbolic structure on the
interior of a compact, orientable 3--manifold $M$ with $k\geq 1$ torus
boundary components.  Let $T_1, \dots, T_k$ be horospherical tori
which are embedded as cross--sections to the cusps of the complete
structure.  Let $s_1, \dots, s_k$ be slopes, $s_i$ on $T_i$.  Then
$M(s_1, \dots, s_k)$ admits a complete hyperbolic structure in which
the core cures of the Dehn filled solid tori are isotopic to
geodesics, provided the normalized lengths $\hat{L_i} =
\ell_{norm}(s_i)$ satisfy
\[ \sum_{i=1}^k \frac{1}{\hat{L_i}^2} < \frac{1}{(7.5832)^2}. \]
\label{thm:hk}
\end{theorem}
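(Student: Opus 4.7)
The statement is attributed to Hodgson--Kerckhoff and advertised as a consequence of Theorem 1.2 of \cite{hk:shape}, so the plan is not to reprove a deep cone-deformation result but to explain how their theorem specializes to the formulation given here.

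First I would recall the framework. Starting from the complete, finite-volume hyperbolic structure on the interior of $M$, view this structure as a hyperbolic cone manifold structure on each topological Dehn filling $M(s_1,\dots,s_k)$ with cone angle $\alpha_i = 0$ along the core $\Sigma_i$ of the $i$-th filled solid torus. By Hodgson--Kerckhoff \cite{hk:rigid}, one can increase each cone angle from $0$ to some small $\varepsilon > 0$ via a smooth one-parameter family of cone manifold structures; throughout this family the singular locus $\Sigma = \Sigma_1 \cup \dots \cup \Sigma_k$ is totally geodesic in the cone metric. If this family can be extended all the way to the common value $\alpha_i = 2\pi$ for every $i$, then at the endpoint the cone singularities disappear, the resulting metric is a complete, smooth, finite-volume hyperbolic metric on $M(s_1,\dots,s_k)$, and each core curve $\Sigma_i$ remains isotopic to a closed geodesic (a limit of geodesic singular loci).

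The substantive content of Theorem 1.2 of \cite{hk:shape} is a universal numerical criterion guaranteeing that this deformation does not degenerate before reaching $\alpha_i = 2\pi$. Hodgson and Kerckhoff establish that the relevant obstruction is controlled by the sum $\sum_i 1/\hat L_i^2$, where $\hat L_i$ is the normalized length of the filling slope $s_i$ measured at any embedded cross-sectional horospherical torus $T_i$ (well-defined because $\ell_{\mathrm{norm}}$ is scale-invariant). When this sum is smaller than $1/(7.5832)^2$, their estimate shows the cone deformation extends from $\alpha=0$ up to $\alpha=2\pi$ simultaneously on all filled cusps, with the geometry of $\Sigma$ staying under control.

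So the second step of the proof is simply to verify that the hypotheses of their theorem match ours: the horoball neighborhoods of the cusps are disjoint and embedded by assumption, the slopes $s_1,\dots,s_k$ are specified on these cross-sections, and the normalized length inequality is exactly the input required. The final step is to interpret the conclusion: existence of the cone deformation through $\alpha = 2\pi$ produces the sought-after complete hyperbolic structure on $M(s_1,\dots,s_k)$, and, because the singular locus is geodesic at every cone angle in $(0, 2\pi]$, the core curves of the filled solid tori are isotopic to geodesics in the filled structure. The only real obstacle is ensuring that one is quoting the right form of \cite{hk:shape}; once that is in hand the argument is a direct translation rather than a new computation.
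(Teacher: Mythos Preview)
Your proposal is correct and follows essentially the same approach as the paper: both explain the statement as a specialization of Theorem~1.2 of \cite{hk:shape} via the cone-deformation framework, increasing cone angles from $0$ to $2\pi$ along the cores of the filled solid tori. The only minor difference is that the paper pins down more concretely \emph{why} the cores remain isotopic to geodesics at the endpoint: it cites the fact (from the proof of Theorem~1.2, page~36 of \cite{hk:shape}) that each component of the singular locus retains an embedded tube of radius at least $\operatorname{arctanh}(1/\sqrt{3})$ throughout the deformation, which is a cleaner justification than your ``limit of geodesic singular loci'' remark.
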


Theorem 1.2 of \cite{hk:shape} is actually a more general theorem
about Dehn filling space for manifolds with multiple cusps.  However,
in the proof of that theorem it is shown that under the above
assumptions on normalized lengths of slopes, a cone deformation exists
from cone angle $0$ to $2\pi$ for which each component of the singular
locus has a tube about it of radius at least
$\rm{arctanh}(1/\sqrt{3})$ (page 36 of \cite{hk:shape}).  The
components of the singular locus correspond to the cores of the filled
solid tori.  Since each has a tube about it throughout the
deformation, the cores remain isotopic to geodesics.  See also the
explanation in \cite{hk:shape} on page 5, after the statement of
Theorem 1.2.

\begin{lemma}
Let $M$, $L$, $\lambda_i$, and $\mu_i$ be as in Proposition
\ref{prop:slope-lengths}.  Then the normalized length of each slope
$s_i = \mu_i + n_i\,\lambda_i$ is at least
\[ \ell_{norm}(s_i) \geq \sqrt{c_i}, \]
where again $c_i$ is the number of half--twists inserted by the Dehn
filling along slope $s_i$.
\label{lemma:norm-lengths}
\end{lemma}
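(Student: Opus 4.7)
The plan is to compute the normalized length by working in the orthogonal lattice basis $\{\pp_i, \oo_i\}$ from Lemma \ref{lemma:generators}, and then apply AM--GM. The key observation is that normalized length is independent of the choice of horoball neighborhood, so I do not need to invoke Theorem \ref{thm:hyp-result} to pin down absolute lengths of $\pp_i$ and $\oo_i$; only the ratio will matter.

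First I would write down the area of the cusp torus $T_i$. The group $H_1(\partial N(C_i);\ZZ)$ is generated by $\lambda_i = 2\oo_i$ and $\mu_i = \pp_i + \epsilon_i \oo_i$ (with $\epsilon_i \in \{0,1\}$), and $\pp_i \perp \oo_i$ by construction of the lattice. A fundamental domain is therefore a parallelogram of Euclidean area
\[ A_i \;=\; |2\oo_i \times (\pp_i + \epsilon_i\oo_i)| \;=\; 2|\oo_i|\,|\pp_i|. \]

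Next I would compute (a lower bound for) the Euclidean length of $s_i = \mu_i + n_i \lambda_i$ in this same basis, exactly as in the proof of Proposition \ref{prop:slope-lengths}. In the two-component case, $s_i = \pp_i + 2n_i\,\oo_i$, and in the one-component case, $s_i = \pp_i + (1+2n_i)\,\oo_i$. Using $c_i = 2|n_i|$ or $c_i = 2|n_i|-1$ respectively, and the orthogonality of $\pp_i$ and $\oo_i$, both cases yield
\[ \mathrm{length}(s_i)^2 \;\geq\; |\pp_i|^2 + c_i^2\,|\oo_i|^2. \]

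Finally I would form the normalized length and apply AM--GM to the two nonnegative quantities $|\pp_i|^2$ and $c_i^2|\oo_i|^2$:
\[ \ell_{norm}(s_i)^2 \;\geq\; \frac{|\pp_i|^2 + c_i^2\,|\oo_i|^2}{2|\pp_i||\oo_i|} \;\geq\; \frac{2\,c_i\,|\pp_i|\,|\oo_i|}{2|\pp_i||\oo_i|} \;=\; c_i, \]
giving $\ell_{norm}(s_i) \geq \sqrt{c_i}$. I expect no real obstacle here; the only thing to be careful about is keeping straight which case of Lemma \ref{lemma:generators} one is in so that the correct identification of $c_i$ with the coefficient of $\oo_i$ in $s_i$ is used, and noting that no information about the actual sizes of $|\pp_i|$ and $|\oo_i|$ is needed because the normalized length is scale-invariant.
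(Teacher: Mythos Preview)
Your proposal is correct and follows essentially the same approach as the paper: compute the cusp area as $2|\pp_i||\oo_i|$, express the slope length via the orthogonal decomposition $\pp_i + c_i\oo_i$, and then minimize the resulting ratio. The paper carries out the final minimization by calculus in the variable $p_i/o_i$, whereas you use AM--GM; these are equivalent, and your observation that scale-invariance of normalized length makes Theorem~\ref{thm:hyp-result} unnecessary here is exactly right (the paper's proof does not invoke it either).
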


The proof of Lemma \ref{lemma:norm-lengths} is similar to that of
Proposition \ref{prop:slope-lengths}, except with the added difficulty
that we are considering normalized lengths, and not actual lengths.
Compare to \cite[Proposition 6.5]{purcell:cusps}.  

\begin{proof}
Write the slope $s_i=\mu_i + n_i\,\lambda_i$ in terms of the lengths
of $\oo_i$ and $\pp_i$, of Lemma \ref{lemma:generators}.  In
particular, as in Proposition \ref{prop:slope-lengths}, the slope is
given by $\pp_i + c_i\,\oo_i$, where $c_i$ is the number of
half--twists inserted by the Dehn filling, and since $\oo_i$ and
$\pp_i$ are orthogonal, its length is given by $\sqrt{p_i^2 +
c_i^2\,o_i^2}$, where $p_i$ and $o_i$ denote the lengths of geodesic
representatives of $\pp_i$ and $\oo_i$.  By Lemma
\ref{lemma:generators}, the area of the cusp torus is given by
$2o_ip_i$.

Thus the normalized length of $s_i= \mu_i + n_i\,\lambda_i$ is given
by
\[ \ell_{norm}(s_i) =
\frac{\sqrt{p_i^2 + c_i^2o_i^2}}{\sqrt{2p_io_i}} =
\sqrt{\frac{p_i}{2o_i} + \frac{c_i\,o_i}{2p_i}}. \]

Minimize the normalized length with respect to $p_i/o_i$.  We
find that its value is minimum when the ratio $p_i/o_i$ equals
$c_i$.  In this case, the normalized length will be $\sqrt{c_i}$.
\end{proof}

We may now prove Theorem \ref{thm:geodesics}, giving results on
isotopy classes of geodesics in generalized augmented links.

\begin{theorem}
	Let $K$ be a knot or link in $S^3$ which has a diagram $D$ and a
maximal twist region selection with $\Tw(D)$ twist regions, such that
the corresponding augmentation is hyperbolic.  Let $c_i$ be the number
of half--twists in the $i$-th twist region.  Then each crossing circle
is isotopic to a geodesic in the hyperbolic structure on $S^3-K$,
provided
\[ \sum_{i=1}^{\Tw(D)} \frac{1}{c_i} < \frac{1}{(7.5832)^2}.\]
\label{thm:geodesics}
\end{theorem}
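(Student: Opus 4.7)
The plan is to combine Lemma \ref{lemma:norm-lengths} with the Hodgson--Kerckhoff criterion (Theorem \ref{thm:hk}), viewing $S^3-K$ as a Dehn filling of the augmentation $S^3-L$. By hypothesis, $S^3-L$ is hyperbolic, and $S^3-K$ is obtained from $S^3-L$ by Dehn filling the cusp at each crossing circle $C_i$ along the slope $s_i = \mu_i + n_i\,\lambda_i$, where $n_i$ is the integer so that the filling reinserts $c_i$ half--twists at the $i$-th twist region (as in the proof of Theorem \ref{thm:hyp-knot}).

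First I would apply Lemma \ref{lemma:norm-lengths} to conclude that the normalized length $\hat L_i = \ell_{norm}(s_i)$ of each filling slope satisfies $\hat L_i \geq \sqrt{c_i}$. The hypothesis $\sum_{i=1}^{\Tw(D)} 1/c_i < 1/(7.5832)^2$ then yields
\[ \sum_{i=1}^{\Tw(D)} \frac{1}{\hat L_i^2} \;\leq\; \sum_{i=1}^{\Tw(D)} \frac{1}{c_i} \;<\; \frac{1}{(7.5832)^2}, \]
which is exactly the normalized length hypothesis of Theorem \ref{thm:hk}. That theorem then produces a complete hyperbolic structure on $S^3-K$ in which the core curves of the Dehn filled solid tori are isotopic to closed geodesics.

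Finally I would identify the core of the solid torus attached at the $i$-th cusp with the $i$-th crossing circle. The meridian of $C_i \subset S^3$, viewed as an unknot, is $\mu_i$, while $\lambda_i$ is isotopic in $N(C_i)$ to $C_i$ itself. Since the filling slope $s_i = \mu_i + n_i\lambda_i$ has intersection number one with $\lambda_i$ on $\partial N(C_i)$, the core of the attached solid torus is isotopic in the filled manifold to a representative of $\lambda_i$, hence to $C_i$ (more precisely, to the unknotted curve encircling the twisted strands in the same location as the original crossing circle, via the standard Rolfsen twist picture).

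The only real obstacle here is bookkeeping: verifying that Lemma \ref{lemma:norm-lengths} applies to all the slopes $s_i$ simultaneously and that the cores produced by Hodgson--Kerckhoff are precisely the crossing circles. Neither step requires additional geometric input beyond what has already been established, so once the chain of inequalities is assembled the conclusion is immediate.
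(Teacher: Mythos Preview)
Your proposal is correct and follows essentially the same route as the paper: apply Lemma \ref{lemma:norm-lengths} to bound each normalized length below by $\sqrt{c_i}$, feed the resulting inequality $\sum 1/\hat L_i^2 \leq \sum 1/c_i$ into the Hodgson--Kerckhoff criterion (Theorem \ref{thm:hk}), and conclude that the cores of the filled solid tori---namely the crossing circles---are isotopic to geodesics. The paper's version is slightly terser (it does not spell out the intermediate inequality or the core/crossing-circle identification), but the argument is the same.
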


\begin{proof}
$S^3-K$ is obtained from $S^3-L$ by Dehn filling the crossing circles.
By Lemma \ref{lemma:norm-lengths}, the normalized lengths of the
slopes of the Dehn filling are at least $\sqrt{c_i}$, where $c_i$ is
the number of half--twists in the $i$-th generalized twist region of
$D$.  By Theorem \ref{thm:hk}, the cores of the filled solid tori are
isotopic to geodesics provided
\begin{eqnarray*}
\sum_{i=1}^{\Tw(D)} \frac{1}{c_i} &<& \frac{1}{(7.5832)^2}.
\end{eqnarray*}
\end{proof}

\bibliography{references}

\end{document}